\newtheorem{corollary}{Corollary}[section]
\newtheorem{remark}[corollary]{Remark}
\newtheorem{theorem}[corollary]{Theorem}
\newfont{\sBlackboard}{msbm10 scaled 900}
\newcommand{\mylabel}[1]{\label{#1}
            \ifx\undefined\stillediting
            \else \fbox{$#1$}\fi }
\newcommand{\BE}{\begin{equation}}
\newcommand{\EEQ}{\end{equation}}
\newcommand{\rfb}[1]{\mbox{\rm
   (\ref{#1})}\ifx\undefined\stillediting\else:\fbox{$#1$}\fi}
\newfont{\Blackboard}{msbm10 scaled 1200}
\newfont{\roma}{cmr10 scaled 1200}
\def\CC{\rm \hbox{C\kern-.56em\raise.4ex
         \hbox{$\scriptscriptstyle |$}\kern+0.5 em }}
\def\Frac{\displaystyle\frac}
\def\n{|\kern -.05cm{|}\kern -.05cm{|}}
\def\R{{\bf \hbox{\sc I\hskip -2pt R}}} 
\def\r{\rho}
\newcommand{\mm}    {{\hbox{\hskip 0.5pt}}}
\newcommand{\bluff} {{\hbox{\raise 15pt \hbox{\mm}}}}
\def\section{\@startsection {section}{1}{\z@}{-3.5ex plus -1ex minus
    -.2ex}{2.3ex plus .2ex}{\large\bf}}
\def\be{\begin{equation}}
\def\ee{\end{equation}}
\date{ }
\begin{document}
\thispagestyle{empty}
\title{\bf   The universal bound property for a class of second order ODEs}\maketitle

\author{ \center  Mama ABDELLI (1,2)\\
1.Université Djillali Liabés, Laboratoire de Mathématique, \\B.P.89. Sidi Bel Abbés 22000,
Algeria.\\
2. Mustapha Stambouli University, Mascara, (Algeria) \\
abdelli$_{-}$mama@yahoo.fr\\}
\medskip\author{ \center  Alain HARAUX \\ Sorbonne Université, Université Paris-Diderot SPC, CNRS, INRIA, \\
Laboratoire Jacques-Louis Lions,  LJLL, F-75005,
Paris, France.\\ haraux@ann.jussieu.fr\\}

\vskip20pt

 \renewcommand{\abstractname} {\bf Abstract}
\begin{abstract} We consider  the scalar second order ODE  $ u'' +
|u'|^{\alpha}u' + |u|^\beta u=0$, where $\alpha, \beta$ are two positive numbers and the non-linear semi-group $S(t)$ generated on $\R^2$ by the system in $(u, u')$. We prove that $S(t)\R^2$ is bounded for all $t>0$ whenever $0< \alpha<\beta$ and moreover there is a constant $C$ independent of the initial data such that
    $$ \forall t>0,\,\,\,\ u'(t)^{2} +|u(t)|^{\beta +2}\leq C \max \{t ^{- \frac {2}{\alpha}}, \,\, t^{-\frac {(\alpha+1)(\beta + 2)}{\beta -\alpha}}\}.$$

\end{abstract}
\bigskip\noindent
 {\small \bf AMS classification numbers:} 34A34, 34C10, 34D05, 34E99

\bigskip\noindent {\small \bf Keywords:}  Second order scalar ODE, Decay rate.\newpage
\section {Introduction}
 It is well known that a certain number of dynamical systems $S(t)$ defined on a Banach space $X$ have the property of universal boundedness for all $t>0$, in the sense that 
 \begin{equation} \forall t>0, \quad S(t) X \hbox { is a bounded subset of } X.   \end{equation}  As a simple example we can consider the first order scalar ODE 
 \begin{equation}  u' + \delta |u|^{\r} u = 0 \end{equation}  with $\delta, \r $ positive. Indeed for any initial state  $u(0) = u_0$ we have 
 $$ |u(t)| \le \frac {C}{t^{1/\r}} $$ Where $C= {(\r\delta)}^{-1/\r} $ is independent of $u_0$. This property extends  classically to some classes of nonlinear parabolic PDEs, for instance the semilinear parabolic equation $$ u_t -\Delta u + \delta |u|^{\r} u = 0 $$  with either Dirichlet or Neumann homogeneous boundary conditions, the result follows at once from the maximum principle. For a more elaborate quasilinear case,  cf.  \cite{Simon}.

 It is natural to ask  whether second order ODEs with superlinear dampings such as  \begin{equation}  u''+ \omega^2u + \delta |u'|^{\r} u' = 0 \end{equation} have the same property, however  the equation \begin{equation}  u''+ u +  |u'|u' = 0 \end{equation} has an explicit solution defined for all $t$ on the line, more precisely $ u(t) = \frac{1}{4} t^2 -\frac{1}{2}$ is a solution for $t\le 0$ which extends uniquely for $t\ge0$ and has an unbounded range. In such a case, due to the autonomous character of the equation, the entire range of the unbounded solution is contained in $S(t)\R^2$ for all $t>0$  and universal boundedness fails.   In this paper we consider the scalar second order ODE
 \begin{equation}\label{1}
 u'' + |u'|^{\alpha}u' + |u|^\beta u=0,
 \end{equation} for which, after the limited elementary approach of  \cite{har0}, Philippe Souplet \cite{Souplet} gave a definitive negative answer at least when $\alpha\ge \beta\ge 0$. Strangely enough, although this equation has been studied very carefully in  \cite{har1} which several extensions in \cite{AH, A^2H}, it seems that nobody questionned the possibility of universal boundedness  when $0< \alpha < \beta$.

 \section {Universal bounds  for equation (\ref{1}) when $0< \alpha < \beta$. }
 The fact that the semi-group $S(t)$ generated by equation (\ref{1}) is universally bounded  when $0< \alpha < \beta$ for $t>0$ would actually follow from a careful inspection of the proof of Theorem 3, (a) from \cite{Souplet}, since this proof allows to see that the life-time of the solutions to the backward equation tends to $0$ when the size of the initial state tends to infinity. We do not discuss here well-posedness of the initial-value problem for equation (\ref{1}) which is completely standard. For any solution $u$ on $\R$,  the energy defined by 
the  formula
\begin{equation}\label{2}
E(t) = \Frac{1}{2}u'(t)^{2} + \Frac{1}{\beta +2}|u(t)|^{\beta +2}.
\end{equation}
is a good way of measuring the size of the solution at time $t$. 
By multiplying equation (\ref{1}) by $u'$, we find \begin{equation}\label{A3}
\Frac{d}{dt}E(t) = -|u'(t)|^{\alpha+2}\leq 0.
\end{equation}
hence the energy of any solution is non-increasing. Moreover, universal boundedness of the dynamical system generated by equation (\ref{1}) is equivalent to the existence of a  bound of the energy independent of the solution. We shall give two different bounds when $t$ tends to zero depending on the relative positions of $\alpha$ and $\frac{\beta}{\beta+2}$. The first  result of this paper is 
\begin{theorem}\label{Th1}
Assuming  $0< \alpha \le  \frac{\beta}{\beta+2},$
          there is a constant $C$ independent of
          $E(0)$ such that
     $$
     \forall t\leq 1,\,\,\,\ E(t)\leq C t^{-\frac{2}{\alpha}
     }.
     $$
\end{theorem}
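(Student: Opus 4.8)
The plan is to produce, for a well-chosen perturbation of the energy, a first-order differential inequality of exactly the type satisfied by the model equation of the Introduction, namely $y'+c\,y^{1+\alpha/2}\le K$, whose solutions obey a universal bound $y(t)\le C\,t^{-2/\alpha}$ on $0<t\le1$ independent of $y(0)$ (the exponent matching $\big((1+\tfrac\alpha2)-1\big)^{-1}=\tfrac2\alpha$). Since (\ref{A3}) gives $E'=-|u'|^{\alpha+2}$, the dissipation controls only the kinetic part, so the issue is to feed back the potential energy. To this end I introduce the cross term $I(t):=|u|^{\sigma}u\,u'$ with the exponent $\sigma:=\frac{\alpha(\beta+2)}{2}$, and work with $E_\e:=E+\e\,I$ for a small $\e>0$ to be fixed. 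Writing $g(u)=|u|^{\sigma}u$, so $g'(u)=(\sigma+1)|u|^{\sigma}$, and using (\ref{1}) to substitute $u''=-|u'|^{\alpha}u'-|u|^{\beta}u$, one gets $I'=(\sigma+1)|u|^{\sigma}u'^{2}-|u|^{\sigma}u\,|u'|^{\alpha}u'-|u|^{\sigma+\beta+2}$, hence $E_\e'=-|u'|^{\alpha+2}-\e|u|^{\sigma+\beta+2}+\e(\sigma+1)|u|^{\sigma}u'^{2}-\e|u|^{\sigma}u\,|u'|^{\alpha}u'$. The first two terms are dissipative, and the crucial identity $\sigma+\beta+2=\frac{(\beta+2)(\alpha+2)}{2}$ shows that the good potential term carries exactly the exponent of $|u|$ occurring in $E^{(\alpha+2)/2}$.

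The two remaining terms are handled by Young's inequality: for any $\eta>0$ one has $|u|^{\sigma}u'^{2}\le\eta|u'|^{\alpha+2}+C_\eta|u|^{\sigma(\alpha+2)/\alpha}$ and $|u|^{\sigma+1}|u'|^{\alpha+1}\le\eta|u'|^{\alpha+2}+C_\eta|u|^{(\sigma+1)(\alpha+2)}$. Here $\sigma(\alpha+2)/\alpha=\frac{(\beta+2)(\alpha+2)}{2}=\sigma+\beta+2$, so the completion of the quadratic term lands \emph{exactly} on the good potential exponent. The hypothesis $\alpha\le\frac{\beta}{\beta+2}$ is equivalent to $\alpha(\beta+2)\le\beta$, hence to both $(\sigma+1)(\alpha+2)\le\frac{(\beta+2)(\alpha+2)}{2}$ and $2\sigma+2\le\beta+2$; the former makes the exponent of the second completion no larger than the good one, so for $|u|\ge1$ it is absorbed by $-\e|u|^{\sigma+\beta+2}$, and the latter yields $|I|\le\frac12 u'^{2}+\frac12|u|^{2\sigma+2}\le C(E+1)$, i.e. $E_\e$ and $E$ are equivalent up to an additive constant. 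Choosing $\eta$ large so that $(\sigma+2)C_\eta\le\frac12$, and then $\e$ small so that the dissipation coefficient stays $\ge\frac12$ and the equivalence holds, one obtains on the region $|u|\ge1$ the estimate $E_\e'\le-\frac12|u'|^{\alpha+2}-\frac\e2|u|^{(\beta+2)(\alpha+2)/2}$; using $(X+Y)^{(\alpha+2)/2}\le C\big(X^{(\alpha+2)/2}+Y^{(\alpha+2)/2}\big)$ this is $\le-c\,E_\e^{(\alpha+2)/2}$. On the bounded region $|u|<1$ the two bad potential terms are $\le1$, which contributes only a fixed constant, so altogether $E_\e'\le-c\,E_\e^{(\alpha+2)/2}+K$ for some $c,K>0$, valid for all $t$.

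Applying the comparison lemma (of the type that yields the universal bound for the first-order model of the Introduction) to $y=E_\e$ gives $E_\e(t)\le C\big(t^{-2/\alpha}+1\big)$, and since $t^{-2/\alpha}\ge1$ for $t\le1$ while $E\le C'(E_\e+1)$, this produces $E(t)\le C''\,t^{-2/\alpha}$ for $0<t\le1$, with all constants independent of $E(0)$. The main obstacle, and the heart of the argument, is the identification of the correct exponent $\sigma=\frac{\alpha(\beta+2)}{2}$: it is the unique value for which the good potential term, the completed quadratic term $g'(u)u'^{2}$, and the target $E^{(\alpha+2)/2}$ all carry the same power of $|u|$, and it is precisely the hypothesis $\alpha\le\frac{\beta}{\beta+2}$ (equivalently, subcriticality with respect to the scaling invariance that (\ref{1}) enjoys at $\alpha=\frac{\beta}{\beta+2}$) that keeps the cross-term completion subcritical, so that every bad term is absorbable. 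Once $\sigma$ is fixed, verifying the exponent inequalities, the equivalence $E_\e\sim E$, and the Young estimates is routine.
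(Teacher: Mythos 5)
Your proposal is correct and follows essentially the same route as the paper: the same perturbed energy $E+\varepsilon|u|^{\alpha(\beta+2)/2}uu'$, the same Young-inequality absorptions landing on the exponent $(\beta+2)(\frac{\alpha}{2}+1)$, the same use of the hypothesis $\alpha\le\frac{\beta}{\beta+2}$ both for the equivalence $E_\varepsilon\sim E$ and for subcriticality of the cross-term completion, and the same final comparison lemma (Ghidaglia's inequality) giving the universal $t^{-2/\alpha}$ bound. The only cosmetic difference is that you split into the regions $|u|\ge1$ and $|u|<1$ where the paper uses $|u|^{a}\le|u|^{b}+1$.
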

\begin{proof}
We consider the perturbed energy function
\begin{equation}\label{4}
F_\varepsilon (t) = E(t) + \varepsilon  |u|^{\frac{\alpha}{2}(\beta+2)}uu',
\end{equation}
where 
$\varepsilon > 0.$ To prove that $F_\varepsilon (t) \sim E(t) $, we use (\ref{2})
and Young's inequality, we have
\begin{equation}
\begin{split}\label{A5}
\vert |u|^{\frac{\alpha}{2}(\beta+2)}u u'\vert &\leq
 \frac{1}{2}|u'|^{2}+\frac{1}{2}|u|^{\alpha(\beta+2)+2}\\&
 \leq \frac{\beta+2}{2}\Big(\frac{1}{2}|u'|^{2}+ \frac{1}{\beta+2}|u|^{\alpha(\beta+2)+2}\Big)\\&\leq \frac{\beta+2}{2} E(t) +\frac{1}{2}  
\end{split}
\end{equation} since the condition $\alpha \le  \frac{\beta}{\beta+2}$ yields $ \alpha(\beta+2)+2\le {\beta+2}$, therefore
$$ \forall u\in \R,\quad |u|^{\alpha(\beta+2)+2}\le |u|^{\beta+2} + 1 $$
Then, by using (\ref{A5}), we obtain from (\ref{4})
$$
(1-M\varepsilon)E(t)- \varepsilon/2\leq F_\varepsilon(t) \leq (1+M\varepsilon)E(t) + \varepsilon/2.
$$ with $M := \frac{\beta+2}{2}.$
Taking  $\varepsilon\leq \varepsilon_0=\Frac{1}{2M}$, we deduce
\begin{equation}\label{6}
\forall t\geq 0,\,\,\, \Frac{1}{2}E(t)- \Frac{1}{4}\leq F_\varepsilon(t)\leq 2 E(t)+\Frac{1}{4} .
\end{equation}
Using (\ref{1}) and (\ref{A3}), we obtain
{\small\begin{equation}\label{33a}
\begin{split}
F'_\varepsilon(t) &= -|u'|^{\alpha+2} + \varepsilon
(\frac{\alpha}{2}(\beta+2)+1)|u|^{\frac{\alpha}{2}(\beta+2)}u'^{2}+ \varepsilon
|u|^{\frac{\alpha}{2}(\beta+2)}u(-|u'|^{\alpha}u'-|u|^{\beta}u),\\ 
&= -|u'|^{\alpha+2} -\varepsilon  |u|^{(\beta+2)(\frac{\alpha}{2}+1)} +\varepsilon
(\frac{\alpha}{2}(\beta+2)+1)|u|^{\frac{\alpha}{2}(\beta+2)}u'^{2} -\varepsilon
|u|^{\frac{\alpha}{2}(\beta+2)} u |u'|^\alpha u'.
\end{split}
\end{equation}}
By using Young's inequality, with the conjugate exponents
$\frac{\alpha+2}{2}$ and $\frac{\alpha+2}{\alpha}$, we get
$$
u'^2|u|^{\frac{\alpha}{2}(\beta+2)} \leq \delta|u|^{(\beta+2)(\frac{\alpha}{2}+1)}
+ C(\delta)|u'|^{\alpha+2},
$$
and taking  $\delta$ small enough, we have for some $P>0$
\begin{equation}\label{123}
\varepsilon (\frac{\alpha}{2}(\beta+2)+1)u'^2|u|^{\frac{\alpha}{2}(\beta+2)} \leq
\frac{\varepsilon }{4}|u|^{(\beta+2)(\frac{\alpha}{2}+1)}
+ P\varepsilon|u'|^{\alpha+2}.
\end{equation}
Using (\ref{123}), we have from (\ref{33a}) that
\begin{equation}\label{234}
F_\varepsilon'(t)\leq (-1+P\varepsilon)|u'|^{\alpha+2}
-\Frac{3\varepsilon}{4}|u|^{(\beta+2)(\frac{\alpha}{2}+1)}-\varepsilon|u|^{\frac{\alpha}{2}(\beta+2)}u|u'|^{\alpha}u'.
\end{equation}
Applying Young's inequality, with the conjugate exponents
$\frac{\alpha+2}{\alpha+1}$ and $\frac{1}{\alpha+2}$, we have
$$
-|u|^{\frac{\alpha}{2}(\beta+2)}u|u'|^{\alpha}u'\leq \delta|u|^{\frac{\alpha+2}{2}(2+\alpha(\beta+2))}
+C'(\delta)|u'|^{\alpha+2}.
$$
If $\alpha \le \frac{\beta}{\beta+2}$, then clearly 
$$
\frac{\alpha+2}{2}(2+\alpha(\beta+2)) \le(\beta+2)(\frac{\alpha}{2}+1),$$ 
then we have 
$$
\forall u \in \mathbb{R},\,\,\, |u|^{\frac{\alpha+2}{2}(2+\alpha(\beta+2))}\leq |u|^{(\beta+2)(\frac{\alpha}{2}+1)}+1,
$$
and taking $\delta$ small enough
$$
-\varepsilon|u|^{\frac{\alpha}{2}(\beta+2)}u|u'|^{\alpha}u'\leq \Frac{\varepsilon }{4}|u|^{(\beta+2)(\frac{\alpha}{2}+1)}
+P'\varepsilon|u'|^{\alpha+2}+C.
$$
By replacing it  in (\ref{234}), we have
$$
F'_\varepsilon(t)\leq(-1+Q\varepsilon)|u'|^{\alpha+2}-\Frac{\varepsilon}{2}|u|^{(\beta+2)(\frac{\alpha}{2}+1)}+C,
$$
where $Q=P+P'$. By choosing $\varepsilon$ small, we get
{\small\begin{equation}\label{A0}
 \begin{split}
F'_\varepsilon(t)&\leq-\frac{\varepsilon}{2}\Big(|u'|^{\alpha+2}
+|u|^{(\beta+2)(\frac{\alpha}{2}+1)}\Big)+C\\&\leq -\frac{\varepsilon}{2}
\Big((u'^{2})^{\frac{\alpha+2}{2}}
+(|u|^{\beta+2})^{\frac{\alpha+2}{2}}\Big)+C.
\end{split}
\end{equation}}
and also employing the inequality 
$$
1+x^{1+\mu}\leq (1+x)^{1+\mu}\leq 2^{\mu}(1+x^{1+\mu})
$$ for an arbitrary $\mu>0$, we obtain with $x=\frac{|u|^{\beta+2}}{u'^2}$ and $\mu =\frac{\alpha}{2}$
$$
(u'^{2})^{\frac{\alpha}{2}+1}+(|u|^{\beta+2})^{\frac{\alpha}{2}+1}\leq
 \Big(u'^{2}+|u|^{\beta+2}\Big)^{\frac{\alpha}{2}+1}
\leq 2^{\frac{\alpha}{2}}\Big((u'^{2})^{\frac{\alpha}{2}+1}+(|u|^{\beta+2})^{\frac{\alpha}{2}+1}\Big)
$$
Hence (\ref{A0}) becomes
{\small\begin{equation}\label{A00}
 \begin{split}
F'_\varepsilon(t)& \leq -\frac{\varepsilon}{2^{\frac{\alpha}{2}+1}}
\Big(u'^{2}
+|u|^{\beta+2}\Big)^{\frac{\alpha}{2}+1}+C_1,\,\,\,\,\,\,\, \forall t\leq 1.
\end{split}
\end{equation}}
We now use (\ref{6}) to deduce from (\ref{A00}) that
\begin{equation}\label{S1}
F'_\varepsilon(t) \leq -\rho' E^{\frac{\alpha}{2}+1}(t)+C_2\leq-\rho F^{\frac{\alpha}{2}+1}_\varepsilon(t)+C_3,\,\,\,\,\,\, \forall t\leq 1.
\end{equation}
 By Ghidaglia's inequality (cf. e.g. \cite{Temam}, III, Lemma 5.1) we infer \begin{equation}\forall t\leq 1,\quad 
F_\varepsilon(t) \leq \left(\frac{C_3}{\r}\right)^{\frac {2}{\alpha +2}} + \left(\frac{2t}{\r\alpha}\right)^{- \frac {2}{\alpha}}
\end{equation}  and then \begin{equation}\forall t\leq 1,\quad 
E(t) \leq 2\left(\frac{C_3}{\r}\right)^{\frac {2}{\alpha +2}} + 2\left(\frac{2t}{\r\alpha}\right)^{- \frac {2}{\alpha}} + 1
\end{equation}the result follows easily with $C= 2 \left(\frac{C_3}{\r}\right)^{\frac {2}{\alpha +2}} + 2\left(\frac{2}{\r\alpha}\right)^{- \frac {2}{\alpha}} +1 $ since $$ 2\left(\frac{C_3}{\r}\right)^{\frac {2}{\alpha +2}} +1 \le \left [2\left(\frac{C_3}{\r}\right)^{\frac {2}{\alpha +2}} +1)\right]  t^{- \frac {2}{\alpha}}$$ for all $t\in (0, 1]$
 .
\end{proof}

\begin{theorem}\label{Th2}
Assuming  $\beta>0$ and $\frac{\beta}{\beta+2}\le \alpha < \beta ,$
          there is a constant $C$ independent of
          $E(0)$ such that
     $$
     \forall t\leq 1,\,\,\,\ E(t)\leq C t^{-\frac {(\alpha+1)(\beta + 2)}{\beta -\alpha}
     }.
     $$
\end{theorem}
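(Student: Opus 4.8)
The plan is to mimic the structure of the proof of Theorem \ref{Th1}, but with a perturbation exponent adapted to the regime $\frac{\beta}{\beta+2}\le\alpha<\beta$, where the exponent comparisons used there reverse sign. Concretely, I would again set
$$F_\varepsilon(t)=E(t)+\varepsilon|u|^{\gamma}uu',$$
but now with the \emph{smaller} exponent $\gamma=\frac{\beta-\alpha}{\alpha+1}$ in place of $\frac{\alpha}{2}(\beta+2)$. The first task is to check $F_\varepsilon\sim E$: by Young's inequality $\big||u|^\gamma uu'\big|\le\frac12 u'^2+\frac12|u|^{2\gamma+2}$, and the hypothesis $\alpha\ge\frac{\beta}{\beta+2}$ is exactly equivalent to $2\gamma+2\le\beta+2$ (with equality at the endpoint), so $|u|^{2\gamma+2}\le|u|^{\beta+2}+1$ and a two-sided bound of the form $\frac12 E-c\le F_\varepsilon\le 2E+c$ holds for $\varepsilon$ small, exactly as in (\ref{6}).

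Next I would differentiate along (\ref{1}) to get
$$F_\varepsilon'=-|u'|^{\alpha+2}-\varepsilon|u|^{\gamma+\beta+2}+\varepsilon(\gamma+1)|u|^\gamma u'^2-\varepsilon|u|^\gamma u|u'|^\alpha u',$$
and absorb the two cross terms by Young's inequality as in (\ref{123})--(\ref{234}). Here the two exponent constraints play complementary roles to those in Theorem \ref{Th1}: the term $|u|^\gamma u'^2$ produces $|u|^{\gamma(\alpha+2)/\alpha}$, controlled by $|u|^{\gamma+\beta+2}$ since $\gamma\le\frac{\alpha}{2}(\beta+2)$ in this regime, while the term $|u|^{\gamma+1}|u'|^{\alpha+1}$ produces $|u|^{(\gamma+1)(\alpha+2)}$, and the choice $\gamma=\frac{\beta-\alpha}{\alpha+1}$ makes $(\gamma+1)(\alpha+2)=\gamma+\beta+2$ \emph{exactly}, so it is absorbed with no loss. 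After taking $\delta,\varepsilon$ small this yields, for $t\le1$,
$$F_\varepsilon'(t)\le-\frac{\varepsilon}{2}\Big(|u'|^{\alpha+2}+|u|^{\gamma+\beta+2}\Big)+C=-\frac{\varepsilon}{2}\Big((u'^2)^{\theta_1}+(|u|^{\beta+2})^{\theta_2}\Big)+C,$$
where $\theta_1=\frac{\alpha+2}{2}$ and $\theta_2=\frac{\gamma+\beta+2}{\beta+2}=\frac{(\alpha+2)(\beta+1)}{(\alpha+1)(\beta+2)}$.

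The decisive new point, and the step I expect to be the main obstacle, is that unlike in Theorem \ref{Th1} the two dissipation terms no longer carry the same power of the energy: a direct computation gives $\theta_2\le\theta_1$, strictly away from the endpoint, precisely because $\alpha(\beta+2)>\beta$ in this regime, so the elementary identity with $x=|u|^{\beta+2}/u'^2$ used there is unavailable. Instead I would establish the one-sided bound
$$A^{\theta_1}+B^{\theta_2}\ge c\,(A+B)^{\theta_2}-C\qquad(A,B\ge0),$$
valid whenever $\theta_1\ge\theta_2\ge1$, by splitting into the case $A\le1$ (where $B^{\theta_2}$ alone dominates $(A+B)^{\theta_2}$ up to an additive constant) and the case $A>1$ (where $A^{\theta_1}\ge A^{\theta_2}$ and convexity of $t\mapsto t^{\theta_2}$ applies). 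This expresses that the weaker, lower-power potential dissipation governs the large-energy behaviour. Applying it with $A=u'^2$, $B=|u|^{\beta+2}$ and using $F_\varepsilon\sim E$ gives
$$F_\varepsilon'(t)\le-\rho\,F_\varepsilon^{\theta_2}(t)+C_3,\qquad t\le1.$$
Since $\theta_2-1=\frac{\beta-\alpha}{(\alpha+1)(\beta+2)}>0$ by $\alpha<\beta$, Ghidaglia's inequality applies with exponent $\theta_2$ and yields a bound of order $t^{-1/(\theta_2-1)}=t^{-\frac{(\alpha+1)(\beta+2)}{\beta-\alpha}}$; transferring back from $F_\varepsilon$ to $E$ via (\ref{6}) and restricting to $t\le1$ gives the stated estimate. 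As a consistency check I would observe that at the endpoint $\alpha=\frac{\beta}{\beta+2}$ one has $\gamma=\frac{\beta}{2}$, $\theta_2=\theta_1=\frac{\alpha+2}{2}$, and $\frac{(\alpha+1)(\beta+2)}{\beta-\alpha}$ collapses to $\frac{2}{\alpha}$, matching Theorem \ref{Th1}.
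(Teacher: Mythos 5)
Your proposal is correct and follows essentially the same route as the paper's own proof: the same perturbed energy $E+\varepsilon|u|^{\gamma}uu'$ with $\gamma=\frac{\beta-\alpha}{\alpha+1}$, the same equivalence check via $2\gamma+2\le\beta+2$, the same absorption of the cross terms using $\frac{2\gamma}{\alpha}\le\beta+2$ and the exact identity $(\gamma+1)(\alpha+2)=\beta+\gamma+2$, and the same Ghidaglia argument with exponent $1+\frac{\gamma}{\beta+2}$. Your only addition is to spell out the one-sided inequality $A^{\theta_1}+B^{\theta_2}\ge c(A+B)^{\theta_2}-C$ for $\theta_1\ge\theta_2\ge1$, a step the paper passes over with ``we deduce for some positive constants $\mu, C'$''; this is a welcome clarification, not a different method.
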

\begin{proof} Since it is quite similar to the proof of Theorem 2.1 we only sketch out the crucial steps. We consider now the perturbed energy 
\begin{equation}\label{4bis}
G_\varepsilon (t) = E(t) + \varepsilon  |u|^{\gamma}uu',
\end{equation}
where $ \gamma: = \frac{\beta-\alpha}{\alpha +1}$ and  $\varepsilon > 0.$ We again use Young's inequality which provides\begin{equation}
\label{A6}
\vert |u|^{\gamma} u u'\vert \leq
 \frac{1}{2}|u'|^{2}+\frac{1}{2}|u|^{2\gamma+2}
\end{equation} and we observe that the condition $\frac{\beta}{\beta+2}\le \alpha$ now implies $ 2\gamma+2 \le \beta+2$. 
Then, by using (\ref{A6}), we obtain from (\ref{4bis}) that for $\varepsilon $ small enough \begin{equation}
\forall t\geq 0,\,\,\, \Frac{1}{2}E(t)- \Frac{1}{4}\leq G_\varepsilon(t)\leq 2 E(t)+\Frac{1}{4} .
\end{equation}

\begin{equation}\label{33a}
\begin{split}
G'_\varepsilon(t) &= -|u'|^{\alpha+2} + \varepsilon
(\gamma +1)|u|^{\gamma}u'^{2}+ \varepsilon
|u|^{\gamma}u(-|u'|^{\alpha}u'-|u|^{\beta}u),\\ 
&= -|u'|^{\alpha+2} -\varepsilon  |u|^{\beta+ \gamma+2} +\varepsilon [(\gamma +1)|u|^{\gamma}u'^{2} -|u|^{\gamma}u |u'|^{\alpha}u'].
\end{split}
\end{equation}
The two first terms in the last line are perfect for our purpose. It turns out that they control the two last terms up to some constants. Indeed we have  

$$ |u|^{\gamma}u'^{2} \le \delta |u|^{\gamma (\frac {\alpha+2}{\alpha})} + C(\delta )|u'|^{\alpha+2} $$ As observed previously we have 
$2\gamma \le \beta $, hence $ \frac{2\gamma}{\alpha}\le \beta+2 $ and finally $\gamma (\frac {\alpha+2}{\alpha}) \le \beta+\gamma+ 2$. Therefore 
$$ \forall u \in \mathbb{R},\,\,\,|u|^{\gamma (\frac {\alpha+2}{\alpha})} \le |u|^{\beta+ \gamma+2} +1 $$  On the other hand we have 
$$ |u|^{\gamma+1} |u'|^{\alpha+1} \le |u'|^{\alpha+2} + |u|^{(\gamma+1)(\alpha+2)}$$ and it turns out that $ (\gamma+1)(\alpha+2) = \beta+ \gamma + 2.$ After a few manipulations we end up with the inequality 

$$ G'_\varepsilon(t)\le  - \frac{1}{2}|u'|^{\alpha+2}   -\frac{1}{\varepsilon } |u|^{\beta+ \gamma+2} +C$$ Taking account the fact that  $ \frac{2\gamma}{\alpha}\le \beta+2 $ implies 
$ \frac {\alpha+2}{2}\ge \frac {\beta + \gamma+2}{\beta+2}$, we deduce for some positive constants $\mu, C'$
$$ G'_\varepsilon(t)\le  - \mu E^{1+\frac{\gamma}{\beta+2} } +C' $$ and the conclusion follows by the same argument as in the proof of Theorem 2.1. 

\end{proof}

 \section {A sharp universal bound for equation (\ref{1}) when $0< \alpha < \beta$. } Our final result is in fact optimal and has the same appearance in both regions considered in the last section, although the significance is different according to the relative position of $\alpha$ and $\frac{\beta}{ \beta+2}.$ In the RHS of the inequality below, the smaller exponent is relative to the region $\{t\ge1\}$ (decay rate) and the larger exponent is relative to the region $\{t\le1\}$ (indicating the growth of the universal bound as $t$ tends to $0$. The roles of the exponents are exchanged when crossing the value  $\alpha_0=\frac{\beta}{ \beta+2}.$ 

\begin{theorem}\label{Th3}
Assuming  $0< \alpha < \beta$   there is a constant $C$ independent of
          $E(0)$ such that
    \begin{equation}\label{final} \forall t>0,\,\,\,\ E(t)\leq C \max \{t ^{- \frac {2}{\alpha}}, \,\, t^{-\frac {(\alpha+1)(\beta + 2)}{\beta -\alpha}}\} \end{equation}
\end{theorem}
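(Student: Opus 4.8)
The plan is to stitch together the two one-sided estimates already proved and then supply the missing large-time decay, organizing everything around the threshold $\alpha_0 := \frac{\beta}{\beta+2}$ and the two exponents $p_1 := \frac{2}{\alpha}$ and $p_2 := \frac{(\alpha+1)(\beta+2)}{\beta-\alpha}$. A direct computation shows $p_1=p_2$ exactly at $\alpha=\alpha_0$, that $p_1>p_2$ for $\alpha<\alpha_0$, and $p_1<p_2$ for $\alpha>\alpha_0$. Since $p\mapsto t^{-p}$ is increasing for $0<t<1$ and decreasing for $t>1$, one has $\max\{t^{-p_1},t^{-p_2}\}=t^{-\max(p_1,p_2)}$ on $(0,1]$ and $=t^{-\min(p_1,p_2)}$ on $[1,\infty)$. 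Thus (\ref{final}) splits into a small-time \emph{growth} bound with exponent $\max(p_1,p_2)$ and a large-time \emph{decay} bound with exponent $\min(p_1,p_2)$.

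First I would dispatch $0<t\le1$ directly from the previous results: if $\alpha\le\alpha_0$, Theorem \ref{Th1} gives $E(t)\le Ct^{-p_1}$, and here $\max(p_1,p_2)=p_1$, so this is exactly $C\max\{t^{-p_1},t^{-p_2}\}$; if $\alpha\ge\alpha_0$, Theorem \ref{Th2} gives $E(t)\le Ct^{-p_2}$ with $\max(p_1,p_2)=p_2$, again matching (the overlap at $\alpha_0$ is consistent as $p_1=p_2$). In particular $E(1)\le C$ with $C$ independent of the data, so the monotonicity (\ref{A3}) of the energy yields the uniform a priori bound $E(t)\le E(1)\le C$ for all $t\ge1$, hence $|u(t)|^{\beta+2}\le(\beta+2)C=:M^{\beta+2}$ for $t\ge1$.

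The heart of the matter is the decay $E(t)\le Ct^{-\min(p_1,p_2)}$ on $[1,\infty)$, which I would obtain by re-running the perturbed-energy computation of the \emph{opposite} regime. The key point is that the uniform bound $|u|\le M$ for $t\ge1$ reverses the single power comparison that previously forced an additive constant. When $\min(p_1,p_2)=p_1$ (i.e.\ $\alpha\ge\alpha_0$) I would use $F_\varepsilon=E+\varepsilon|u|^{\frac{\alpha}{2}(\beta+2)}uu'$: now $\frac{\alpha+2}{2}(2+\alpha(\beta+2))\ge(\beta+2)(\frac{\alpha}{2}+1)$, so $|u|\le M$ lets one bound $|u|^{\text{larger}}\le M^{\text{diff}}|u|^{\text{smaller}}$ and absorb the offending term into $-\varepsilon|u|^{(\beta+2)(\frac{\alpha}{2}+1)}$ with \emph{no} $+1$. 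When $\min(p_1,p_2)=p_2$ (i.e.\ $\alpha\le\alpha_0$) I would instead use $G_\varepsilon=E+\varepsilon|u|^\gamma uu'$ with $\gamma=\frac{\beta-\alpha}{\alpha+1}$, where now $\gamma\frac{\alpha+2}{\alpha}\ge\beta+\gamma+2$ (reversed), so boundedness absorbs the bad term into $-\varepsilon|u|^{\beta+\gamma+2}$. The same boundedness makes the equivalence clean (no additive constant): for $\varepsilon$ small, $\frac12 E(t)\le\Phi(t)\le 2E(t)$, where $\Phi$ stands for $F_\varepsilon$ or $G_\varepsilon$. The outcome is a remainder-free differential inequality
\[
\Phi'(t)\le-\rho\,\Phi(t)^{1+\theta},\qquad t\ge1,
\]
with $\theta=\frac{\alpha}{2}$ (so $\frac1\theta=p_1$) in the first case and $\theta=\frac{\gamma}{\beta+2}$ (so $\frac1\theta=p_2$) in the second.

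Finally, integrating the remainder-free inequality from $t=1$ gives $\Phi(t)^{-\theta}\ge\rho\theta(t-1)$, hence the data-independent bound $\Phi(t)\le(\rho\theta(t-1))^{-1/\theta}$; combined with $E\le2\Phi$ and a trivial adjustment of constants on the compact interval $[1,2]$ (where $E\le E(1)\le C$), this yields $E(t)\le Ct^{-\min(p_1,p_2)}$ for all $t\ge1$. Stitching this to the small-time bound and using the elementary identities for $\max\{t^{-p_1},t^{-p_2}\}$ recorded above gives (\ref{final}) for every $t>0$. The main obstacle, and the genuinely new ingredient beyond Theorems \ref{Th1}--\ref{Th2}, is precisely this clean differential inequality in the large-time regime: everything rests on the observation that the uniform a priori bound $|u|\le M$ for $t\ge1$ flips the critical exponent comparison in exactly the complementary way, so the term that produced the constant $+C$ in the small-time analysis is now absorbed into the leading dissipation, and the resulting dissipation exponent comes out to be exactly $\min(p_1,p_2)$, the slower of the two decay mechanisms.
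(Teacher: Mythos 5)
Your proposal is correct, and it reaches the conclusion by a genuinely different route in the large-time regime. Both you and the paper handle $0<t\le 1$ identically (Theorems \ref{Th1}--\ref{Th2} give the growth bound with exponent $\max(p_1,p_2)$ and, in particular, the universal bound $E(1)\le C$), and both glue the two regimes with the same elementary monotonicity of $t\mapsto t^{-p}$. The difference is the decay on $[1,\infty)$: the paper simply invokes Theorem 1.1 of \cite{har1}, which gives $E(t)\le K(E(1))\,(t-1)^{-\min(p_1,p_2)}$ with a constant depending only on the (now universal) bound on $E(1)$, whereas you re-derive that decay from scratch by re-running the perturbed-energy computation with the \emph{complementary} multiplier ($F_\varepsilon$ for $\alpha\ge\alpha_0$, $G_\varepsilon$ for $\alpha\le\alpha_0$), using the a priori bound $|u|\le M$ on $[1,\infty)$ to reverse the exponent comparisons and eliminate the additive constants. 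Your version buys self-containedness -- it essentially re-proves the needed half of \cite{har1} -- at the cost of length; the paper's version is shorter but rests on the external sharp-decay result. Your sketch is sound; the only detail you pass over is that in the case $\alpha\le\alpha_0$, after obtaining $G_\varepsilon'\le -c\bigl(|u'|^{\alpha+2}+|u|^{\beta+\gamma+2}\bigr)$, the conversion of $|u'|^{\alpha+2}=(u'^2)^{\frac{\alpha+2}{2}}$ into $c'(u'^2)^{1+\frac{\gamma}{\beta+2}}$ requires one more use of boundedness, since there $\frac{\alpha+2}{2}\le 1+\frac{\gamma}{\beta+2}$ (the inequality is reversed relative to the proof of Theorem \ref{Th2}); this is immediate from $u'^2\le 2E(1)\le C$, but it should be said. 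You should also state explicitly that all the absorption constants depend only on $M$, hence are independent of the solution -- which is exactly what makes the final constant universal.
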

\begin{proof} Let us consider first the case  $0< \alpha \le \alpha_0$. By Theorem 2.1 we have  for a certain constant $C_1$, 
$$\forall t\in (0, 1], \quad E(t)\leq C_1 t^{-\frac{2}{\alpha}}$$ In particular we have for any solution $ E(1) \le C_1.$  As a consequence of \cite{har1}, Theorem 1.1, ii)  we then have 
$$\forall t\ge 2, \quad E(t)\leq K(C_1)( t-1)^{-\frac{(\alpha+1)(\beta + 2)}{\beta -\alpha}}\leq C_2 t^{-\frac{(\alpha+1)(\beta + 2)}{\beta -\alpha}}$$ Now for $t\ge 1$ we have $t^{-\frac{2}{\alpha}}\le t^{-\frac{(\alpha+1)(\beta + 2)}{\beta -\alpha}}$. In particular since E(t) is non-increasing we have  $$\forall t\in [1,  2], \quad E(t) \leq C_1 (t/2)^{-\frac{2}{\alpha}}\le C_3 t^{-\frac{(\alpha+1)(\beta + 2)}{\beta -\alpha}}$$ Introducing  $C = \max\{C_1,C_2, C_3 \}$ we finally obtain \eqref{final}.  The case $ \alpha_0 \le \alpha<\beta $ is similar, using Theorem 2.2 and \cite{har1}, Theorem 1.1, i). 

 \end{proof}
 
  \begin{remark} The estimate \eqref{final} is optimal. For instance for $ \alpha_0 \le \alpha<\beta $ the decay estimate is sharp for all nontrivial solutions, while the growth for $t$ tending to $0$ is sharp for some special solutions of the backward equation found by Souplet \cite{Souplet}. On the other side, when $0< \alpha \le \alpha_0$, all non-trivial solutions of the backward equation blow-up exactly at the rate allowed by the estimate, while the decay estimate is sharp for the "slow-decaying" solutions, cf. \cite{har1}.\end{remark}
 
 \begin{remark} It is clear that Theorem \ref{Th3} can be extended under relaxed assumptions on the non-linearities, and for vector equations such as those considered in \cite{A^2H}. This will be the object of future works. \end{remark}

\end{document}